\documentclass[mathpazo]{cicp}

\usepackage{amsmath,amssymb,amsfonts,mathtools,bm,amsthm}
\usepackage{graphicx,booktabs,array,multirow}
\usepackage{algorithm,algorithmic}
\usepackage{xcolor}
\usepackage{placeins}
\usepackage{hyperref}

\graphicspath{{Figures/}}

\begin{document}

\title{History-Conditioned Inference of Memory \\ and Stochastic Forcing in Generalized Langevin Equations under L\'evy Noise}

\author[Y.~Yang et.~al.]{Yang Yang\affil{1},
      Ting Gao\affil{1}\comma\corrauth,
      Xiaoli Chen\affil{2}\comma\footnotemark[1]}
\address{\affilnum{1}\ School of Mathematics and Statistics,
          Huazhong University of Science and Technology,
          Wuhan 430074, Hubei, P.R. China.\\
          \affilnum{2}\ School of Mathematics and Physics,
          China University of Geosciences (Wuhan),
          Wuhan 430074, Hubei, P.R. China}
\emails{{\tt zachyang@hust.edu.cn} (Y.~Yang),
        {\tt tgao0716@hust.edu.cn} (T.~Gao),
        {\tt xlchen@cug.edu.cn} (X.~Chen)}

\begin{abstract}
We consider the identification of memory and stochastic forcing in a one-dimensional generalized Langevin equation with state-dependent Brownian noise and optional additive symmetric $\alpha$-stable jumps. A finite-dimensional Markov embedding provides Kramers--Moyal conditional moment relations that separate memory drift, Brownian diffusion, and jump contributions.  From increment ensembles conditioned on the observed velocity history, truncated statistics estimate the stable parameters, corrected second moments determine the Brownian amplitude, and first moments lead to a Volterra equation for the memory kernel, solved by RKHS regularization. We establish conditional finite-sample bounds for the moment estimators and a conditional $L^2$ error bound for kernel recovery with a data-selected regularization parameter. Numerical experiments examine Brownian and Brownian--L\'evy models under different forces, memory kernels, observation lengths, measurement noise, and approximate history conditioning.
\end{abstract}

\ams{45D05, 60G52, 60H10, 65R32}
\keywords{generalized Langevin equation; history-conditioned inference; memory kernel; Kramers--Moyal; inverse problems.}

\maketitle

\section{Introduction}
\label{sec:introduction}

Generalized Langevin equations (GLEs) describe the influence of unresolved degrees of freedom through history-dependent interactions. The Mori--Zwanzig formalism derives reduced equations with deterministic, memory, and fluctuating contributions by projecting out unresolved variables~\cite{zwanzig1961,mori1965,chorin2002optimal,darve2009computing,lin2021datadriven}.
Data-driven coarse-graining and the emergence of nonlocal kernel interactions in coarse-grained models have also been studied in~\cite{felsberger2019physics,du2020coarse}.
For equilibrium systems, the fluctuating force is typically colored and linked to the memory kernel through the second fluctuation--dissipation relation, which is used in GLE models for molecular dynamics, coarse-graining, and multiscale simulations~\cite{davtyan2015dynamic,lei2016data,grogan2020data,xie2024abinitio}.
However, in driven, nonequilibrium, or phenomenological GLEs, the stochastic forcing may include state-dependent or heavy-tailed fluctuations and is not necessarily determined by the memory kernel~\cite{medino2012levy,willers2024efficient}.

Recovering the memory contribution and stochastic forcing from GLE trajectories involves two coupled inverse tasks. Increments over short intervals contain effects from history-dependent drift, continuous fluctuations, and discontinuous jumps, while recovering the memory kernel requires solving an ill-conditioned first-kind Volterra problem. Errors in the estimated first conditional moment propagate to the inferred memory force and kernel. Related interactions between stochastic excitation and ill-posed reconstruction also arise in inverse problems with random sources~\cite{bao2010stochastic}.

Most existing GLE identification methods exploit equilibrium statistics or impose structural relations between the memory kernel and the stochastic forcing.
Rational approximations and Prony expansions represent memory kernels through exponential modes and permit efficient formulations with auxiliary variables~\cite{lei2016data,grogan2020data,bockius2021model,duong2022accurate}.
Correlation-based approaches adjust kernel parameters to reproduce target dynamical statistics~\cite{jung2017iterative}. Gaussian-process optimization of discretized correlation functions has also been used to extract kernels from coarsely sampled data~\cite{tepper2024accurate}.
Regularized correlation methods combine kernel representations with Sobolev or RKHS regularization to stabilize memory recovery~\cite{lang2026memory}.
Likelihood methods fit extended non-Markovian models, including models without an imposed fluctuation--dissipation relation~\cite{vroylandt2022likelihood}; Bayesian formulations estimate drift, diffusion, and memory parameters together with posterior uncertainty~\cite{willers2024efficient}.

Machine learning approaches have been used to map dynamical data to long-lived memory kernels~\cite{winter2023deep} and to construct GLE models for transient anomalous diffusion in polymers~\cite{cherchi2024mlgle}. Neural representations of memory kernels and colored-noise generators have also been learned from atomistic data under the fluctuation--dissipation constraint~\cite{xie2024abinitio}. In contrast, the direct separation of state-dependent Brownian fluctuations, stable jump contributions, and memory drift from local increment ensembles conditioned on velocity history remains less developed. 

Short-time transition statistics provide a mechanism for such a method.
Ensembles of short trajectories have also been used to approximate unknown governing equations~\cite{wu2019numerical}.
For Markov diffusion processes, the first two conditional moments characterize the local drift and diffusion coefficients through the Kramers--Moyal expansion~\cite{kramers1940brownian,moyal1949stochastic,friedrich2000extracting}.
This idea has been extended to systems driven by Brownian and L\'evy noises, where drift, diffusion, and jump measures can be inferred from transition statistics~\cite{li2021levy,li2022governing}.
Analyses based on Kramers--Moyal coefficients have also been used to derive forward Fokker--Planck descriptions for GLEs with prescribed memory structures~\cite{wang2019diffusion}.
For a GLE, conditioning only on the current velocity averages over histories with different memory contributions. Conditioning on the velocity history retains the drift information needed for kernel inference.

We use a finite-dimensional Markov embedding to establish the Kramers--Moyal conditional moment relations and then formulate the inverse problem using increment ensembles conditioned on history. The main contributions are listed below.

\begin{itemize}
\item We derive Kramers--Moyal conditional moment relations for GLEs driven by multiplicative Brownian noise, with or without additive symmetric $\alpha$-stable L\'evy forcing.  These relations separate memory drift, Brownian diffusion, and jump contributions.

\item We combine the local moment relations with truncated increment statistics and RKHS regularization adapted to the Volterra data to recover the stable parameters, Brownian amplitude, and memory kernel from increment ensembles.

\item We establish finite sample bounds for the local moment estimators and conditional error bounds for kernel recovery, and examine the method numerically across different dynamics, memory kernels, observation conditions, and errors in history matching.
\end{itemize}

The remainder of the paper is organized as follows.
Section~\ref{sec:preliminaries} introduces the GLE model, the Markov embedding, and the associated conditional moment identities.
Section~\ref{sec:method} presents the estimation framework and gives the finite sample bounds and the conditional result for kernel recovery.
Section~\ref{sec:numerical_experiments} reports numerical experiments.
Section~\ref{sec:conclusions} summarizes the results and discusses future directions.

\section{Preliminaries}
\label{sec:preliminaries}

This section introduces the GLE model and the finite-dimensional Markov representation used to formulate its conditional moments. We derive the first and second Kramers--Moyal coefficients for Brownian and Brownian--L\'evy forcing and state the Volterra relation connecting the first moment to the memory kernel.

\subsection{Model formulation}

On a finite observation interval $[0,T]$, after nondimensionalization and setting the particle mass to unity, we consider the one-dimensional generalized Langevin equation
\begin{equation}
\mathrm{d}v_t
=
\left[
F(v_t)
-
\int_0^t \gamma(t-s)v_s\,\mathrm{d}s
\right]\mathrm{d}t
+
\sigma(v_t)\,\mathrm{d}W_t
+
\kappa\,\mathrm{d}L_t^{(\alpha)},
\label{eq:gle}
\end{equation}
Here $v_t\in\mathbb R$ denotes the observed velocity, $F$ is a known deterministic force, and $\gamma\in L^1(0,T)$ is the unknown memory kernel to be recovered. The function $\sigma(v)$ is the Brownian amplitude, $\alpha\in(0,2)$ is the stability index, and $\kappa\geq0$ is the scale parameter of the additive symmetric stable component. The corresponding Brownian diffusion coefficient is $\sigma^2(v)/2$.
Here $L_t^{(\alpha)}$ is the standardized symmetric $\alpha$-stable L\'evy motion defined by $\mathbb E\!\left[e^{\mathrm i\omega L_t^{(\alpha)}}\right]=\exp\!\left(-t|\omega|^\alpha\right)$ with $0<\alpha<2$.
Under this normalization, the scaled process $\kappa L_t^{(\alpha)}$ has L\'evy measure
\begin{equation}
\nu_{\alpha,\kappa}(\mathrm{d}r)
=
c_\alpha\kappa^\alpha |r|^{-1-\alpha}\,\mathrm{d}r,
\qquad
c_\alpha
=
\frac{
\alpha 2^{\alpha-1}\Gamma\!\left((1+\alpha)/2\right)
}{
\sqrt{\pi}\,\Gamma\!\left(1-\alpha/2\right)
}.
\label{eq:symmetric_stable_measure}
\end{equation}

We regard Eq.~\eqref{eq:gle} as an effective nonequilibrium stochastic model with independent Brownian and symmetric stable drivers.
Conditioned on the velocity history up to time $t$, the memory convolution is fixed, while the future increments of $W$ and $L^{(\alpha)}$ are independent across samples in the increment ensemble. No equilibrium fluctuation--dissipation relation is imposed between the memory kernel $\gamma$ and the stochastic
parameters $\sigma$ and $(\alpha,\kappa)$. We investigate both the Brownian case $\kappa=0$ and the Brownian--L\'evy case $\kappa>0$.

\subsection{Memory representation and Markov embedding}
\label{subsec:memory_embedding}

Kramers--Moyal coefficients are local quantities defined with respect to a Markov state. The observed variable $v_t$ alone does not retain the historical information contained in Eq.~\eqref{eq:gle}.
Accordingly, we introduce auxiliary variables to represent the memory term through a finite-dimensional Markov embedding construction~\cite{lei2016data,ottobre2011asymptotic}.

Let $\psi_k\in C^1([0,T])$, $k=1,\ldots,N_{\rm emb}$, be linearly independent basis functions. The memory kernel is approximated in the finite-dimensional embedding space $\mathcal S_{\rm emb}=\operatorname{span}\{\psi_1,\ldots,\psi_{N_{\rm emb}}\}$ as
\begin{equation}
\gamma_{\rm emb}(t)
=
\sum_{k=1}^{N_{\rm emb}}u_k\psi_k(t)
=
\bm u^\top\bm\psi(t),
\qquad
\bm u=(u_1,\ldots,u_{N_{\rm emb}})^\top.
\label{eq:kernel_expansion}
\end{equation}

The basis space is chosen such that the basis functions admit a finite-dimensional realization
\begin{equation}
\bm\psi'(t)=\bm R\bm\psi(t),
\qquad
t\in[0,T],
\label{eq:basis_closure}
\end{equation}
where $\bm R\in\mathbb R^{N_{\rm emb}\times N_{\rm emb}}$ is the corresponding realization matrix. Introducing the auxiliary variables
\begin{equation}
\bm z_t
=
\int_0^t\bm\psi(t-s)v_s\,\mathrm{d}s,
\label{eq:auxiliary_variable}
\end{equation}
the closure relation in Eq.~\eqref{eq:basis_closure} gives $\mathrm{d}\bm z_t=[\bm\psi(0)v_t+\bm R\bm z_t]\mathrm{d}t$.
Therefore, when $\gamma=\gamma_{\rm emb}$, the augmented state $X_t=(v_t,\bm z_t)\in\mathbb R^{N_{\rm emb}+1}$ satisfies
\begin{equation}
\begin{aligned}
\mathrm{d}v_t
&=
\left[
F(v_t)-\bm u^\top\bm z_t
\right]\mathrm{d}t
+
\sigma(v_t)\mathrm{d}W_t
+
\kappa\,\mathrm{d}L_t^{(\alpha)},
\\
\mathrm{d}\bm z_t
&=
\left[
\bm\psi(0)v_t+\bm R\bm z_t
\right]\mathrm{d}t.
\end{aligned}
\label{eq:augmented_system}
\end{equation}

When the true kernel $\gamma$ does not belong to the embedding space
$\mathcal S_{\rm emb}$, the finite-dimensional representation introduces the residual
\begin{equation}
r_{\rm emb}(t)
=
\int_0^t
[\gamma(t-s)-\gamma_{\rm emb}(t-s)]v_s\,\mathrm{d}s.
\end{equation}
For $t\in[0,T]$, assuming
$v\in L^\infty(0,T)$ and
$\gamma-\gamma_{\rm emb}\in L^1(0,T)$, this residual satisfies
\begin{equation}
|r_{\rm emb}(t)|
\leq
\|v\|_{L^\infty(0,t)}
\|\gamma-\gamma_{\rm emb}\|_{L^1(0,t)}
\leq
\|v\|_{L^\infty(0,T)}
\|\gamma-\gamma_{\rm emb}\|_{L^1(0,T)}.
\label{eq:kernel_residual}
\end{equation}
This term characterizes the approximation error associated with the finite-dimensional embedding space $\mathcal S_{\rm emb}$.

\subsection{Kramers--Moyal coefficients}
\label{subsec:km_coefficients}

System~\eqref{eq:augmented_system} provides a Markov representation with the augmented state $X_t=(v_t,\bm z_t)$.  We formulate the following conditional moment relations in this finite-dimensional state space.

\paragraph{Brownian case.}
For $\kappa=0$, let $\Delta v_h=v_{t+h}-v_t$ and $b(v,\bm z)=F(v)-\bm u^\top\bm z$. At the conditioning state $X_t=x$, assume that, for some $h_0>0$,
\begin{equation}
\sup_{0\le s\le h_0}\mathbb E\!\left[|b(X_{t+s})|^2+|\sigma(v_{t+s})|^4\,\middle|\,X_t=x\right]<\infty.
\label{eq:brownian_moment_envelope}
\end{equation}
Integrating the Brownian equation gives
\begin{equation}
\Delta v_h=\int_t^{t+h}b(X_s)\,\mathrm{d}s
+\int_t^{t+h}\sigma(v_s)\,\mathrm{d}W_s.
\label{eq:brownian_increment_expansion}
\end{equation}
Continuity, Eq.~\eqref{eq:brownian_moment_envelope}, It\^o's isometry, and Cauchy--Schwarz give a drift second moment of order $h^2$ and a cross term of order $h^{3/2}$. Hence,
\begin{align}
\mathbb E[\Delta v_h\mid X_t=(v,\bm z)]&=[F(v)-\bm u^\top\bm z]h+o(h),\nonumber\\
\mathbb E[(\Delta v_h)^2\mid X_t=(v,\bm z)]&=\sigma^2(v)h+o(h).
\label{eq:brownian_moment_expansions}
\end{align}

Therefore, the first two Kramers--Moyal coefficients are
\begin{align}
D_B^{(1)}(v,\bm z)
&=
\lim_{h\downarrow0}
\frac{1}{h}
\mathbb E\!\left[
\Delta v_h
\,\middle|\,
X_t=(v,\bm z)
\right]
= F(v)-\bm u^\top\bm z,
\label{eq:brownian_first_limit}
\\
D_B^{(2)}(v,\bm z)
&=
\frac{1}{2}
\lim_{h\downarrow0}
\frac{1}{h}
\mathbb E\!\left[
(\Delta v_h)^2
\,\middle|\,
X_t=(v,\bm z)
\right]
= \frac{1}{2}\sigma^2(v).
\label{eq:brownian_second_limit}
\end{align}

The instantaneous velocity $v_t$ alone is generally insufficient to determine the memory contribution, since different trajectories may reach the same velocity with different histories. 
The augmented variable $\bm z_t$ incorporates this missing memory information, allowing the local drift to be expressed as $F(v)-\bm u^\top\bm z$.

\paragraph{Brownian--L\'evy case.}
For $\kappa>0$, truncated statistics provide conditional moment relations throughout the stable range $0<\alpha<2$.
We therefore characterize the jump contribution through one-dimensional jump rate identities and use truncated conditional moments for the velocity component, based on the framework of Li and Duan~\cite{li2021levy}.
Given $\varepsilon>0$ and $q>1$, the infinitesimal jump rate over the symmetric annulus $\{\varepsilon\leq|\Delta v_h|<q\varepsilon\}$ is
\begin{align}
&\lim_{h\downarrow0}
\frac{1}{h}
\mathbb P\!\left(
\varepsilon\leq|\Delta v_h|<q\varepsilon
\,\middle|\,
X_t=(v,\bm z)
\right)
\nonumber\\
&\qquad=
\int_{\varepsilon\leq |r|<q\varepsilon}
c_\alpha\kappa^\alpha |r|^{-1-\alpha}\,\mathrm{d}r
=
\frac{2c_\alpha\kappa^\alpha}{\alpha}
\varepsilon^{-\alpha}
\left(1-q^{-\alpha}\right).
\label{eq:jump_annulus_limit}
\end{align}
Because the L\'evy noise enters additively with a constant scale parameter $\kappa$, the jump rate in~\eqref{eq:jump_annulus_limit} is independent of the state $(v,\bm z)$.

To characterize the contribution of small jumps to the second conditional moment, define
\begin{equation}
J_\varepsilon(\alpha,\kappa)
=
\int_{|r|<\varepsilon}
r^2c_\alpha\kappa^\alpha |r|^{-1-\alpha}\,\mathrm{d}r
=
\frac{2c_\alpha\kappa^\alpha}{2-\alpha}
\varepsilon^{2-\alpha}.
\label{eq:symmetric_jump_correction}
\end{equation}

For the same threshold $\varepsilon$, symmetry removes the leading contribution of small jumps from the truncated first moment, while the Brownian quadratic variation and retained jumps contribute $\sigma^2(v)h$ and $J_\varepsilon(\alpha,\kappa)h$. Then, the truncated Kramers--Moyal coefficients are given by
\begin{align}
D_{BL,\varepsilon}^{(1)}(v,\bm z)
&=
\lim_{h\downarrow0}
\frac{1}{h}
\mathbb E\!\left[
\Delta v_h
\mathbf{1}_{\{|\Delta v_h|<\varepsilon\}}
\,\middle|\,
X_t=(v,\bm z)
\right]
\nonumber\\
&=
F(v)-\bm u^\top\bm z,
\label{eq:truncated_first_limit}
\\
D_{BL,\varepsilon}^{(2)}(v,\bm z)
&=
\frac{1}{2}
\lim_{h\downarrow0}
\frac{1}{h}
\mathbb E\!\left[
(\Delta v_h)^2
\mathbf{1}_{\{|\Delta v_h|<\varepsilon\}}
\,\middle|\,
X_t=(v,\bm z)
\right]
\nonumber\\
&=
\frac{1}{2}
\left[
\sigma^2(v)+J_\varepsilon(\alpha,\kappa)
\right].
\label{eq:truncated_second_limit}
\end{align}

\subsection{Volterra inverse problem}
\label{subsec:volterra_inverse}

Let $D^{(1)}$ denote $D_B^{(1)}$ in the Brownian case and $D_{BL,\varepsilon}^{(1)}$ in the Brownian--L\'evy case. In both cases, the first Kramers--Moyal relation gives
\begin{equation}
\begin{aligned}
g_{\rm emb}(t)
&=
F(v_t)-D^{(1)}(X_t)
=
\bm u^\top\bm z_t
\\
&=
(\mathcal V_v\gamma_{\rm emb})(t)
=
\int_0^t
\gamma_{\rm emb}(t-s)v_s\,\mathrm{d}s.
\end{aligned}
\label{eq:volterra_operator}
\end{equation}
The continuous Volterra operator is compact, and its discretizations are ill-conditioned. Consequently, direct inversion can amplify perturbations in the estimated memory convolution. Considerable effort has been devoted to developing regularization techniques for first-kind Volterra equations~\cite{wang2020tikhonov,lamm2000,lu2022dartr}.

\section{Proposed method}
\label{sec:method}

This section constructs the inverse procedure from
increment ensembles conditioned on history.  We first estimate the noise parameters from empirical local moments and then recover the memory kernel by RKHS regularization adapted to the data in the first moment equation.  The final subsection gives finite sample and conditional bounds for kernel recovery.

\subsection{Observation data and local moments}
\label{subsec:history_matched_observations}

We observe a reference velocity trajectory $\{v_l\}_{l=0}^{L}$ on the uniform grid $t_l=l\Delta t$ over $[0,T]$.
At each internal time $t_l$, the available data consist of an ensemble of $M$ increments over a short interval
\begin{equation}
\mathcal D_l
=
\left\{
\Delta v_{l,m}
\right\}_{m=1}^{M},
\qquad
\Delta v_{l,m}
=
v^{(m)}_{t_l+\Delta t}-v_l.
\label{eq:increment_ensemble}
\end{equation}
In the ideal setting, all increments in $\mathcal D_l$ share the complete reference history $\mathcal H_{t_l}^v=\{v_s:0\leq s\leq t_l\}$, and their future Brownian and
L\'evy increments are conditionally independent.  Such increment ensembles may be obtained by controlled repetition or numerical resampling; the inverse procedure requires the conditional ensembles rather than a particular acquisition mechanism.

Under an exact embedding, the augmented state $X_t=(v_t,\bm z_t)$ is determined by $\mathcal H_t^v$ and contains the history information needed for the future transition law. Thus, for every bounded measurable increment statistic $f$,
\begin{equation}
\mathbb E[f(\Delta v_h)\mid X_t]
=\mathbb E[f(\Delta v_h)\mid\mathcal H_t^v].
\label{eq:state_history_equivalence}
\end{equation}
This identity covers the truncated statistics used with L\'evy forcing and extends to integrable increment statistics.
The conditional moments can consequently be estimated from
the increment ensembles without reconstructing the auxiliary variables $\bm z_t$.
Departures from the condition of a common history are examined in Subsection~\ref{subsec:reference_conditioning_stress_test}.

\subsection{Estimation of noise parameters}
\label{subsec:stochastic_parameters}

\paragraph{Brownian--L\'evy parameters.}
The stable component is additive and state independent, so increments from different reference states can be pooled.
These increments are observations over a finite interval rather than direct samples of individual jumps and may contain drift, Brownian fluctuations, small jumps, or multiple jumps.
Threshold exceedances identify the contribution of large jumps only in the small-time regime underlying classical threshold methods~\cite{shimizu2006,mancini2009,aitsahalia2009testing,aitsahalia2009}.

For a threshold $\varepsilon>0$, define the pooled exceedance count
\begin{equation}
N_\varepsilon
=
\sum_{l=1}^{L-1}
\sum_{m=1}^{M}
\mathbf{1}_{\{
|\Delta v_{l,m}|\geq\varepsilon
\}}.
\label{eq:tail_count}
\end{equation}
The stable tail intensity $2c_\alpha\kappa^\alpha\varepsilon^{-\alpha}/\alpha$, together with the exceedance rate and magnitudes, gives the unbinned Poisson--Pareto estimators~\cite{hill1975,mittnik1996}
\begin{equation}
\widehat\alpha
=
N_\varepsilon
\left[\sum_{l=1}^{L-1}\sum_{m=1}^{M}
\mathbf{1}_{\{|\Delta v_{l,m}|\geq\varepsilon\}}
\log\!\left(
\frac{|\Delta v_{l,m}|}{\varepsilon}
\right)\right]^{-1},
\qquad
\widehat\kappa
=
\left[
\frac{
\widehat\alpha\,
\varepsilon^{\widehat\alpha}
N_\varepsilon
}{
2c_{\widehat\alpha}(L-1)M\Delta t
}
\right]^{1/\widehat\alpha}.
\label{eq:tail_parameter_estimators}
\end{equation}

The Poisson--Pareto estimator combines exceedance counts and magnitudes through the leading small-time jump law.
Asymptotic threshold sequences commonly satisfy $\varepsilon_{\Delta t}\to0$, $\varepsilon_{\Delta t}/\sqrt{\Delta t}\to\infty$, and $\Delta t\,\varepsilon_{\Delta t}^{-\alpha}\to0$ as $\Delta t\to0$ ~\cite{mancini2009,aitsahalia2009}.  For finite data, a positive $\varepsilon$ is prescribed before estimation and used in both the Poisson--Pareto estimator and the truncated Kramers--Moyal moments.
It should exceed the continuous increment scale while retaining enough exceedances for tail estimation.
The contribution of small jumps to the second moment is removed through $J_\varepsilon(\widehat\alpha,\widehat\kappa)$, while symmetry removes its leading contribution from the truncated first moment.

\paragraph{Brownian amplitude.}
In the Brownian case, we use the empirical second Kramers--Moyal coefficient
\begin{equation}
\widehat D_{B,l}^{(2)}
=
\frac{1}{2M\Delta t}
\sum_{m=1}^{M}(\Delta v_{l,m})^2
\approx\frac{1}{2}\sigma^2(v_l).
\end{equation}

In the Brownian--L\'evy case, after estimating $(\widehat\alpha,\widehat\kappa)$, the target for the Brownian amplitude at each state is obtained from the truncated second moment
\begin{equation}
\widehat D_{BL,l}^{(2)}
=
\frac{1}{2M\Delta t}
\sum_{m=1}^{M}
(\Delta v_{l,m})^2
\mathbf{1}_{\{|\Delta v_{l,m}|<{\varepsilon}\}}
-
\frac{1}{2}
J_{{\varepsilon}}
(\widehat\alpha,\widehat\kappa)
\approx
\frac{1}{2}\sigma^2(v_l).
\label{eq:tail_corrected_diffusion_target}
\end{equation}

For notational consistency, let $\widehat D_l^{(2)}$ denote $\widehat D_{B,l}^{(2)}$ in the Brownian case and $\widehat D_{BL,l}^{(2)}$ in the Brownian--L\'evy case.
Since the functional form of $\sigma(v)$ is unspecified, we approximate it by a neural network $\sigma_\theta(v)$ and estimate the parameters by minimizing
\begin{equation}
\widehat\theta
=
\arg\min_\theta
\frac{1}{L-1}
\sum_{l=1}^{L-1}
\left[
\widehat D_l^{(2)}
-
\frac{1}{2}\sigma_\theta^2(v_l)
\right]^2.
\label{eq:diffusion_loss}
\end{equation}

\subsection{Regularized memory kernel reconstruction}
\label{subsec:kernel_reconstruction}

The corrected second Kramers--Moyal coefficient is used above to estimate the Brownian amplitude. We now use the first coefficient to recover the memory
kernel.

For each interior index $l$, its empirical estimate is
\nopagebreak[4]
\begin{equation}
\widehat D_l^{(1)}
=
\begin{cases}
\displaystyle
\frac{1}{M\Delta t}
\sum_{m=1}^{M}\Delta v_{l,m},
& \kappa=0,
\\
\displaystyle
\frac{1}{M\Delta t}
\sum_{m=1}^{M}
\Delta v_{l,m}
\mathbf{1}_{\{|\Delta v_{l,m}|<{\varepsilon}\}},
& \kappa>0.
\end{cases}
\label{eq:empirical_first_km}
\end{equation}
The ordinary increment average is used when $\kappa=0$, and its truncated counterpart is used when $\kappa>0$. By the first Kramers--Moyal relation, subtracting this coefficient from the known force estimates the memory convolution
\begin{equation}
\widehat y_l
:=
F(v_l)-\widehat D_l^{(1)}
\approx
\int_0^{t_l}\gamma(t_l-s)v_s\,\mathrm{d}s,
\qquad
l=1,\ldots,L-1.
\label{eq:drift_data}
\end{equation}
The point $t_0$ is excluded because the memory integral vanishes there. 
Using a reconstruction basis $\{\phi_k\}_{k=1}^N$ and the approximation $\gamma(t)\approx\sum_{k=1}^Nc_k\phi_k(t)$ gives
\begin{equation}
\widehat y_l
\approx
\sum_{k=1}^{N}\Phi_{lk}c_k,
\qquad
\Phi_{lk}
=
\Delta t
\sum_{j=0}^{l-1}
\phi_k(t_l-t_j)v_j.
\label{eq:response_matrix}
\end{equation}
With $\widehat{\bm y}=(\widehat y_1,\ldots,\widehat y_{L-1})^\top$ and $\bm c=(c_1,\ldots,c_N)^\top$, the resulting system is $\bm\Phi\bm c\approx\widehat{\bm y}$.
Following the RKHS formulation adapted to operator data in~\cite{lu2022dartr}, we estimate $\bm c$ by
\begin{equation}
\widehat{\bm c}
=
\underset{\bm c\in\operatorname{range}(\bm\Phi^\top)}{\arg\min}
\left\{
\|\bm\Phi\bm c-\widehat{\bm y}\|_2^2
+
\lambda\bm c^\top(\bm\Phi^\top\bm\Phi)^\dagger\bm c
\right\},
\qquad
\left[
\bm\Phi^\top\bm\Phi
+\lambda(\bm\Phi^\top\bm\Phi)^\dagger
\right]
\widehat{\bm c}
=\bm\Phi^\top\widehat{\bm y}.
\label{eq:normal_equation}
\end{equation}
Here $\dagger$ denotes the Moore--Penrose inverse; {when $\bm\Phi$ has full column rank}, $(\bm\Phi^\top\bm\Phi)^\dagger=
(\bm\Phi^\top\bm\Phi)^{-1}$.
The parameter $\lambda$ is selected by the strong robust generalized cross-validation criterion (R1GCV)~\cite{lukas2008strong}.
Finally, the reconstructed memory kernel is given by
\begin{equation}
\widehat\gamma(t)
=
\sum_{k=1}^{N}
(\widehat{\bm c})_k\phi_k(t),
\qquad
0\leq t\leq T.
\label{eq:kernel_reconstruction}
\end{equation}
The complete estimation procedure is summarized in Algorithm~\ref{alg:algorithm_summary}.

\begin{algorithm}[!thb]
\caption{Two-stage estimation procedure}
\label{alg:algorithm_summary}
\begin{algorithmic}[1]

\REQUIRE
Velocity $\{v_l\}_{l=0}^{L}$; ensembles $\{\mathcal D_l\}_{l=1}^{L-1}$; time grid $\{t_l\}_{l=0}^{L}$.

\ENSURE
$\widehat\sigma(v)$ and $\widehat\gamma(t)$; for the Brownian--L\'evy case, $\widehat\alpha$ and $\widehat\kappa$.

\IF{Brownian--L\'evy forcing is considered}
    \STATE {Estimate $(\widehat\alpha,\widehat\kappa)$ at the prescribed threshold $\varepsilon$ using Eq.~\eqref{eq:tail_parameter_estimators}.}
    \STATE {Form the corrected targets for the Brownian amplitude using Eq.~\eqref{eq:tail_corrected_diffusion_target}.}
\ELSE
    \STATE Construct the empirical second Kramers--Moyal coefficients.
\ENDIF

\STATE Estimate $\widehat\sigma(v)=\sigma_{\widehat\theta}(v)$ by minimizing Eq.~\eqref{eq:diffusion_loss}.

\STATE Compute $\widehat D_l^{(1)}$ using Eq.~\eqref{eq:empirical_first_km}.

\STATE Assemble $\widehat{\bm y}$ and $\bm\Phi$ from Eqs.~\eqref{eq:drift_data} and~\eqref{eq:response_matrix}.

\STATE Select $\widehat\lambda>0$, solve Eq.~\eqref{eq:normal_equation}, and reconstruct $\widehat\gamma$ using Eq.~\eqref{eq:kernel_reconstruction}.

\end{algorithmic}
\end{algorithm}

\subsection{Identifiability and error analysis}
\label{sec:error_analysis}

We condition on the reference histories, denoted collectively by $\mathcal H$, and resample only the increment ensembles. Thus $\bm\Phi$ is fixed. Write $h=\Delta t$, $\ell=L-1$, and
\begin{equation}
b_l=F(v_l)-\int_0^{t_l}\gamma(t_l-s)v_s\,\mathrm{d}s,
\qquad \sigma_l=\sigma(v_l).
\label{eq:conditional_local_coefficients}
\end{equation}

\paragraph{Identifiable quantities.}
For $\kappa>0$, the infinitesimal exceedance rate is
$\Lambda_\varepsilon=2c_\alpha\kappa^\alpha\varepsilon^{-\alpha}/\alpha$.
The ratio $\Lambda_{q\varepsilon}/\Lambda_\varepsilon=q^{-\alpha}$ identifies $\alpha$, and the rate then identifies $\kappa$. The corrected second moment identifies $\sigma^2(v_l)$ and hence the nonnegative amplitude $\sigma(v_l)$. At $\kappa=0$, $\alpha$ is not identifiable. For the chosen reconstruction space, noiseless discrete responses identify all coefficients exactly when $\ker\bm\Phi=\{0\}$. A rank-deficient design identifies their projection onto $\operatorname{range}(\bm\Phi^\top)$.

\paragraph{Local statistical errors.}
Let $Y_{l,m}=\Delta v_{l,m}$ for Brownian forcing and
$Y_{l,m}=\Delta v_{l,m}\mathbf1_{\{|\Delta v_{l,m}|<\varepsilon\}}$ for Brownian--L\'evy forcing. For conditionally independent replicas with the same law within each row, define
\begin{equation}
m_{j,l}=h^{-1}\mathbb E[Y_{l,1}^{j}\mid\mathcal H],\qquad
V_{j,l}=h^{-1}\operatorname{Var}(Y_{l,1}^{j}\mid\mathcal H),\qquad j=1,2.
\label{eq:local_finite_lag_moments}
\end{equation}
Whenever $V_{j,l}<\infty$, the variance of $(Mh)^{-1}\sum_mY_{l,m}^{j}$ is $V_{j,l}/(Mh)$. This identity applies to exact nonlinear increments.

\begin{samepage}
For frozen Brownian increments $\Delta v_{l,m}=b_lh+\sigma_l\sqrt h\,\xi_{l,m}$ with conditionally independent $\xi_{l,m}\sim\mathcal N(0,1)$,
\begin{align}
\mathbb E[\widehat D_l^{(1)}\mid\mathcal H]&=b_l,
&\operatorname{Var}(\widehat D_l^{(1)}\mid\mathcal H)&=\frac{\sigma_l^2}{Mh},
\label{eq:d1_error_exact}\\
\mathbb E[\widehat D_l^{(2)}\mid\mathcal H]&=\frac{\sigma_l^2+b_l^2h}{2},
&\operatorname{Var}(\widehat D_l^{(2)}\mid\mathcal H)&=\frac{\sigma_l^4/2+b_l^2\sigma_l^2h}{M}.
\label{eq:d2_error_exact}
\end{align}
\end{samepage}

\begin{proposition}[Truncated moment errors]
\label{prop:levy_local_moments}
For Brownian--L\'evy forcing, fix $\varepsilon>0$ independently of the sampled increments. For $0<\delta<1$, put $x_\delta=\log(4\ell/\delta)$ and $B_1=2\varepsilon$, $B_2=\varepsilon^2$. With conditional probability at least $1-\delta$, simultaneously for $j=1,2$ and $l\le\ell$,
\begin{equation}
\left|\frac1{Mh}\sum_mY_{l,m}^{j}-m_{j,l}\right|
\le a_{j,l}:=\sqrt{\frac{2V_{j,l}x_\delta}{Mh}}+\frac{B_jx_\delta}{3Mh}.
\label{eq:levy_first_bernstein}
\end{equation}
Writing $\beta_{1,l}=|m_{1,l}-b_l|$ and
$\beta_{2,l}=|m_{2,l}-\sigma_l^2-J_\varepsilon(\alpha,\kappa)|$, the same event gives, for a tail fit with $0<\widehat\alpha<2$,
\begin{equation}
|\widehat D_l^{(1)}-b_l|\le a_{1,l}+\beta_{1,l},\qquad
\left|\widehat D_l^{(2)}-\frac{\sigma_l^2}{2}\right|
\le\frac{a_{2,l}+\beta_{2,l}+|\widehat J_\varepsilon-J_\varepsilon|}{2},
\qquad
\text{where }
\widehat J_\varepsilon=J_\varepsilon(\widehat\alpha,\widehat\kappa).
\label{eq:levy_diffusion_propagation}
\end{equation}

\end{proposition}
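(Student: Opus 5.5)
The plan is to apply Bernstein's inequality row by row, then take a union bound over the $2\ell$ rows and both signs, and finish with the triangle inequality.

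\textbf{Step 1: boundedness and centering.} Conditionally on $\mathcal H$, the truncated variables $Y_{l,m}=\Delta v_{l,m}\mathbf 1_{\{|\Delta v_{l,m}|<\varepsilon\}}$ satisfy $|Y_{l,m}|<\varepsilon$. Hence $|Y_{l,m}^j|\le\varepsilon^j$ for $j=1,2$. The centered summands $Z_{l,m}^{(j)}=Y_{l,m}^j-\mathbb E[Y_{l,1}^j\mid\mathcal H]$ are therefore bounded by $2\varepsilon=B_1$ when $j=1$. When $j=2$, both $Y^2$ and its mean lie in $[0,\varepsilon^2]$, so $|Z^{(2)}|\le\varepsilon^2=B_2$. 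This explains why $B_2=\varepsilon^2$ rather than $2\varepsilon^2$. Fixing $\varepsilon$ independently of the data keeps the truncation deterministic. The replicas are conditionally i.i.d. within a row, and $\operatorname{Var}(Z^{(j)}_{l,m}\mid\mathcal H)=hV_{j,l}$ by \eqref{eq:local_finite_lag_moments}.

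\textbf{Step 2: Bernstein plus union bound.} Apply the one-sided Bernstein inequality to $\sum_{m=1}^MZ^{(j)}_{l,m}$: with probability at least $1-e^{-x}$,
\[
\sum_m Z\le \sqrt{2MhV_{j,l}x}+B_jx/3 .
\]
Dividing by $Mh$ gives exactly $a_{j,l}$. There are $2\ell$ pairs $(j,l)$ and two tails each, so $4\ell$ events in total. Choosing $x=x_\delta=\log(4\ell/\delta)$ makes each failure probability $\delta/(4\ell)$, and a union bound yields the simultaneous statement \eqref{eq:levy_first_bernstein} with conditional probability at least $1-\delta$.

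\textbf{Step 3: propagation.} On this event, $\widehat D^{(1)}_l=(Mh)^{-1}\sum_mY_{l,m}$ by \eqref{eq:empirical_first_km}. The triangle inequality then gives $|\widehat D_l^{(1)}-b_l|\le a_{1,l}+|m_{1,l}-b_l|$. For the second moment, \eqref{eq:tail_corrected_diffusion_target} gives
\[
2\widehat D^{(2)}_l-\sigma_l^2=\Bigl[(Mh)^{-1}\sum_mY^2_{l,m}-m_{2,l}\Bigr]+\bigl[m_{2,l}-\sigma_l^2-J_\varepsilon\bigr]+\bigl[J_\varepsilon-\widehat J_\varepsilon\bigr].
\]
Taking absolute values and dividing by $2$ yields \eqref{eq:levy_diffusion_propagation}. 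The condition $0<\widehat\alpha<2$ only ensures that $\widehat J_\varepsilon=J_\varepsilon(\widehat\alpha,\widehat\kappa)$ is finite and well defined through \eqref{eq:symmetric_jump_correction}.

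\textbf{Main obstacle.} The delicate point is the independence structure needed for Bernstein. The tail fit $(\widehat\alpha,\widehat\kappa)$ uses the same pooled increments, but it enters only through the deterministic term $|\widehat J_\varepsilon-J_\varepsilon|$ and is never concentrated. The concentration itself is applied only to the row sums, whose summands are conditionally independent given $\mathcal H$ for a fixed $\varepsilon$. The argument therefore needs no independence between the tail fit and the moments. The bias terms $\beta_{j,l}$ are left unquantified here; bounding them would require the small-$h$ expansions behind \eqref{eq:truncated_first_limit}–\eqref{eq:truncated_second_limit}.
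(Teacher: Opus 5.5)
Your proposal is correct and follows the paper's proof essentially step for step: Bernstein's inequality row by row, with centered bounds $B_j$ and total variance $MhV_{j,l}$; a union bound giving failure probability $4\ell e^{-x_\delta}=\delta$ (the paper counts $2\ell$ two-sided events at $2e^{-x}$ each, which is equivalent to your $4\ell$ one-sided events); and adding and subtracting $m_{1,l}$ and $m_{2,l}-J_\varepsilon$ to obtain the propagation bounds. Your extra remarks are also accurate: $B_2=\varepsilon^2$ suffices because $Y^2$ and its mean both lie in $[0,\varepsilon^2]$, and the tail fit needs no independence because $|\widehat J_\varepsilon-J_\varepsilon|$ is left unconcentrated in the bound.
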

\begin{proof}
See Appendix~\ref{app:proofs}.
\end{proof}
These bounds hold for every $0<\alpha<2$. At each fixed matched history and threshold $\varepsilon$, the local identities give $\beta_{j,l}\to0$ as $h\downarrow0$. In particular,
\begin{equation}
V_{1,l}\longrightarrow\sigma_l^2+J_\varepsilon(\alpha,\kappa),\qquad
V_{2,l}\longrightarrow\frac{2c_\alpha\kappa^\alpha}{4-\alpha}\varepsilon^{4-\alpha}.
\label{eq:levy_variance_limits}
\end{equation}
Thus the truncated second statistic also has sampling fluctuations of order $(Mh)^{-1/2}$ at fixed threshold, unlike the frozen Brownian second statistic.

For the actual tail estimators, let $\widehat\Lambda_\varepsilon=N_\varepsilon/(\ell Mh)$. Substitution in Eq.~\eqref{eq:tail_parameter_estimators} gives the identity
\begin{equation}
\widehat J_\varepsilon-J_\varepsilon
=\varepsilon^2\left[
\frac{\widehat\alpha}{2-\widehat\alpha}(\widehat\Lambda_\varepsilon-\Lambda_\varepsilon)
+\frac{2\Lambda_\varepsilon(\widehat\alpha-\alpha)}{(2-\widehat\alpha)(2-\alpha)}\right].
\label{eq:stable_correction_sensitivity}
\end{equation}
This identity propagates count and stability-index errors to the small-jump correction for $0<\widehat\alpha<2$. The count error is bounded in Appendix~\ref{app:proofs}; the denominators exhibit the sensitivity as $\alpha$ approaches $2$. For prescribed $\varepsilon$, neither the tail fit nor the Brownian-amplitude fit enters the first-moment kernel estimator.

For the amplitude regression, set $f_\theta=\sigma_\theta^2/2$, $f_\star=\sigma^2/2$, and $\|\bm a\|_\ell^2=\ell^{-1}\sum_la_l^2$. If the achieved residual norm is within $\eta_{\rm opt}$ of its infimum, then
\begin{equation}
\|f_{\widehat\theta}(\bm v)-f_\star(\bm v)\|_\ell
\le\inf_\theta\|f_\theta(\bm v)-f_\star(\bm v)\|_\ell
+2\|\widehat{\bm D}^{(2)}-f_\star(\bm v)\|_\ell+\eta_{\rm opt}.
\label{eq:diffusion_fit_decomposition}
\end{equation}
Moreover, $\|\widehat\sigma(\bm v)-\sigma(\bm v)\|_\ell^2
\le2\|f_{\widehat\theta}(\bm v)-f_\star(\bm v)\|_\ell$ for nonnegative amplitudes. This bounds the Brownian-amplitude error at the sampled states.

\paragraph{Propagation to the memory kernel.}
Assume $\gamma\in L^2(0,T)$ and linearly independent continuous reconstruction functions. Let $\gamma_N=\bm\phi^\top\bm c_N$ be the $L^2$ projection of $\gamma$, and set $\eta_N=\|\gamma-\gamma_N\|_{L^2}$. Define
\begin{equation}
\bm G_N=\int_0^T\bm\phi(t)\bm\phi(t)^\top\,\mathrm{d}t,\qquad
 g_N^+=\lambda_{\max}(\bm G_N),\quad g_N^-=\lambda_{\min}(\bm G_N).
\label{eq:l2_basis_gram}
\end{equation}
The resolution index $\mu_N=\min_{\bm c\ne0}\|\bm\Phi\bm c\|_\ell/\|\bm\phi^\top\bm c\|_{L^2}$ is positive exactly at full column rank and measures identifiability independently of basis scaling.
Let $\bm\Phi=\bm U_r\operatorname{diag}(s_i)\bm V_r^\top$ be its compact SVD, with rank $r\ge1$ and $s_i>0$. Put $\bm\Pi=\bm V_r\bm V_r^\top$ and
\begin{align}
\rho_{N,\lambda}&=\max_{i\le r}\frac{s_i^3}{s_i^4+\lambda},\qquad
\bm R_\lambda=\bm I-\bm\Pi+\bm V_r\operatorname{diag}\!\left(\frac{\lambda}{s_i^4+\lambda}\right)\bm V_r^\top,
\nonumber\\
\mathcal B_{N,\lambda}&=\|\bm G_N^{1/2}\bm R_\lambda\bm c_N\|_2.
\label{eq:rkhs_filter_factors}
\end{align}
The last term retains both regularization bias and the unresolved coefficient component. For $r=N$, it is at most
$\sqrt{g_N^+/g_N^-}\,\lambda\|\gamma\|_{L^2}/(\min_i s_i^4+\lambda)$.

Set $\beta_{1,h}=\max_l|m_{1,l}-b_l|$ for the actual sampling law and
\begin{equation}
q_{N,h}=\max_l\left|\int_0^{t_l}\gamma_N(t_l-s)v_s\,\mathrm{d}s-(\bm\Phi\bm c_N)_l\right|,
\qquad r_{N,h}=\beta_{1,h}+\|v\|_{L^2}\eta_N+q_{N,h}.
\label{eq:deterministic_residual_bound}
\end{equation}
Appendix~\ref{app:proofs} separates finite-lag, history-preparation, and numerical contributions to $\beta_{1,h}$ and bounds $q_{N,h}$ directly for c\`adl\`ag paths, including infinite-variation L\'evy paths.

\begin{theorem}[Conditional kernel recovery]
\label{thm:resolved_recovery}
Condition on the reference histories and suppose the replicas are independent over all $(l,m)$. Let $v$ be c\`adl\`ag on $[0,T]$ and $\overline V_1=\max_lV_{1,l}<\infty$. Use the minimum Euclidean norm solution of Eq.~\eqref{eq:normal_equation}. For $0<\delta<1$, define
\begin{equation}
\mathfrak z_\delta=
\begin{cases}
\displaystyle\sqrt{\frac{r\overline V_1}{Mh\delta}},&\kappa=0,\\
\displaystyle2\sqrt{\frac{2\overline V_1x_{r,\delta}}{Mh}}+
\frac{4\varepsilon x_{r,\delta}}{3Mh},&\kappa>0,
\end{cases}
\qquad x_{r,\delta}=r\log5+\log(2/\delta).
\label{eq:projected_sampling_bound}
\end{equation}
Then, with conditional probability at least $1-\delta$, simultaneously for every $\lambda>0$,
\begin{equation}
\|\widehat\gamma_\lambda-\gamma\|_{L^2}
\le\eta_N+\sqrt{g_N^+}\,\rho_{N,\lambda}
\left(\mathfrak z_\delta+\sqrt\ell\,r_{N,h}\right)+\mathcal B_{N,\lambda}.
\label{eq:resolved_kernel_bound}
\end{equation}
The same event covers any positive $\widehat\lambda$ selected from the data, including R1GCV.
\end{theorem}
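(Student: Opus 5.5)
The argument is deterministic once one vector inequality holds on a single high-probability event. On that event every $\lambda>0$ is handled at once, so a data-selected $\widehat\lambda$ is covered automatically.

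\textbf{Closed form of the estimator.} Write the minimum-norm solution of Eq.~\eqref{eq:normal_equation} in the SVD basis:
\[
\widehat{\bm c}_\lambda=\bm V_r\operatorname{diag}\!\left(\frac{s_i^3}{s_i^4+\lambda}\right)\bm U_r^\top\widehat{\bm y}.
\]
This holds because, on $\operatorname{range}(\bm V_r)$, the operator $\bm\Phi^\top\bm\Phi+\lambda(\bm\Phi^\top\bm\Phi)^\dagger$ acts as $\operatorname{diag}(s_i^2+\lambda s_i^{-2})$.

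\textbf{Splitting the data.} Decompose $\widehat{\bm y}=\bm\Phi\bm c_N+\bm e+\bm d$. Here $\bm e=\widehat{\bm y}-\mathbb E[\widehat{\bm y}\mid\mathcal H]$ is the sampling noise. The term $\bm d$ collects the deterministic residual
\[
F(v_l)-m_{1,l}-(\bm\Phi\bm c_N)_l
=(b_l-m_{1,l})+\int_0^{t_l}(\gamma-\gamma_N)(t_l-s)v_s\,\mathrm{d}s+\Big[\int_0^{t_l}\gamma_N(t_l-s)v_s\,\mathrm{d}s-(\bm\Phi\bm c_N)_l\Big].
\]
Cauchy--Schwarz bounds the middle integral by $\|v\|_{L^2}\eta_N$, so each entry of $\bm d$ is at most $r_{N,h}$ and $\|\bm d\|_2\le\sqrt\ell\,r_{N,h}$. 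The c\`adl\`ag assumption guarantees that $v\in L^2$ and that $q_{N,h}$ is finite.

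Substituting the decomposition gives
\[
\widehat{\bm c}_\lambda-\bm c_N=-\bm R_\lambda\bm c_N+\bm V_r\operatorname{diag}\!\left(\frac{s_i^3}{s_i^4+\lambda}\right)\bm U_r^\top(\bm e+\bm d),
\]
where the first term uses $\bm\Phi\bm c_N=\bm U_r\operatorname{diag}(s_i)\bm V_r^\top\bm c_N$ and the definition of $\bm R_\lambda$. For any coefficient vector, $\|\bm\phi^\top\bm a\|_{L^2}=\|\bm G_N^{1/2}\bm a\|_2\le\sqrt{g_N^+}\|\bm a\|_2$. Combining this with the triangle inequality $\|\widehat\gamma_\lambda-\gamma\|\le\eta_N+\|\bm\phi^\top(\widehat{\bm c}_\lambda-\bm c_N)\|$, the bias term gives exactly $\mathcal B_{N,\lambda}$. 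The noise term is bounded by
\[
\sqrt{g_N^+}\,\rho_{N,\lambda}\left(\|\bm U_r^\top\bm e\|_2+\|\bm d\|_2\right).
\]
It therefore remains to show $\|\bm U_r^\top\bm e\|_2\le\mathfrak z_\delta$ with probability at least $1-\delta$. This event does not involve $\lambda$, which gives the uniformity in $\lambda$.

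\textbf{Brownian case ($\kappa=0$).} The entries $e_l$ are conditionally independent and centred, with variance $V_{1,l}/(Mh)\le\overline V_1/(Mh)$. Because $\bm U_r$ has orthonormal columns,
\[
\mathbb E\|\bm U_r^\top\bm e\|_2^2=\operatorname{tr}\!\left(\bm U_r^\top\operatorname{Cov}(\bm e)\,\bm U_r\right)\le \frac{r\overline V_1}{Mh}.
\]
Chebyshev's inequality then gives $\|\bm U_r^\top\bm e\|_2\le\sqrt{r\overline V_1/(Mh\delta)}$ with probability at least $1-\delta$.

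\textbf{L\'evy case ($\kappa>0$): main obstacle.} Here one needs an exponential bound with dimension factor $r$. I would take a $1/2$-net $\mathcal N$ of the unit sphere in $\mathbb R^r$, with $|\mathcal N|\le5^r$, and use
\[
\|\bm U_r^\top\bm e\|_2\le2\max_{\bm w\in\mathcal N}\langle\bm U_r\bm w,\bm e\rangle.
\]
For a fixed unit vector $\bm a=\bm U_r\bm w$, the sum $\langle\bm a,\bm e\rangle=\sum_{l,m}a_l\,(Y_{l,m}-\mathbb E Y_{l,m})/(Mh)$ has independent summands. Each is bounded in absolute value by $|a_l|\,2\varepsilon/(Mh)\le2\varepsilon/(Mh)$, since $|Y|<\varepsilon$. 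The total variance is $\sum_la_l^2V_{1,l}/(Mh)\le\overline V_1/(Mh)$. One-sided Bernstein then gives
\[
\langle\bm a,\bm e\rangle\le\sqrt{\frac{2\overline V_1x}{Mh}}+\frac{2\varepsilon x}{3Mh}
\]
with probability at least $1-e^{-x}$. Taking $x=x_{r,\delta}$, a union bound over the $5^r$ net points (with a factor $2$ if two-sided control is wanted) yields failure probability at most $\delta$. The net factor $2$ then turns this into exactly $\mathfrak z_\delta$.

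The delicate points are keeping the Bernstein range constant at $2\varepsilon/(Mh)$ uniformly over net directions, and matching the constants in $x_{r,\delta}$. Everything else is linear algebra.
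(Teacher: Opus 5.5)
Your proposal is correct and follows the paper's proof essentially step by step. It uses the same decomposition $\widehat{\bm y}=\bm\Phi\bm c_N+\bm d+\bm e$ with $\|\bm d\|_2\le\sqrt\ell\,r_{N,h}$, the same spectral closed form and error identity with $\bm R_\lambda$, Markov/Chebyshev on $\|\bm U_r^\top\bm e\|_2^2$ for $\kappa=0$, and a $1/2$-net with Bernstein and a union bound for $\kappa>0$. The only blemish is harmless: in the L\'evy step your expression for $\langle\bm a,\bm e\rangle$ has the opposite sign from your own definition, since $e_l=m_{1,l}-\widehat D^{(1)}_l$, but the Bernstein range and variance bounds are symmetric, so nothing changes.
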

\begin{proof}
See Appendix~\ref{app:proofs}.
\end{proof}
The Brownian bound uses only conditional variance; Appendix~\ref{app:proofs} gives its Gaussian refinement for frozen steps. This quantifies stability at the selected parameter. Increasing $M$ reduces sampling error, while deterministic discrepancies and unresolved directions can leave an error floor. Response and design perturbations contribute $\Delta\bm y-\Delta\bm\Phi\bm c_N$ to $\bm d$; the spectral bound then uses the matrix actually inverted.

\section{Numerical experiments}
\label{sec:numerical_experiments}

This section reports numerical results for Brownian and Brownian--L\'evy models.  We first present representative reconstructions and comparisons with typical kernel estimators, and then examine sensitivity to sampling, observation noise, force and kernel forms, and departures from exact history matching.
The code for the numerical experiments is available at \url{https://github.com/yeungineer/gle-inverse}.

\subsection{Representative inverse problems}
\label{subsec:representative_inverse_problems}

We consider a common reference problem with $F=0$ and a memory kernel whose parameters are randomly generated subject to exponential decay,
\begin{equation}
\begin{split}
\gamma(t)={}&0.4021e^{(-0.1425-0.2987\mathrm{i})t}
+0.4021e^{(-0.1425+0.2987\mathrm{i})t}\\
&+0.2753e^{-0.7814t}+0.4638e^{-0.9643t}
+0.3187e^{-0.4012t}.
\end{split}
\label{eq:numerical_reference_kernel}
\end{equation}

Reference trajectories are generated by the Euler--Maruyama scheme with $\Delta t=10^{-2}$, $L=3\times10^3$, and $v_0\sim\mathcal N(0,1)$. The numerical update is
\begin{equation}
v_{n+1}
=v_n+\left[
F(v_n)-\Delta t\sum_{j=0}^{n-1}\gamma(t_n-t_j)v_j
\right]\Delta t
+\sigma(v_n)\sqrt{\Delta t}\,\xi_n
+\kappa\Delta L_n^{(\alpha)},
\label{eq:numerical_em_scheme}
\end{equation}
for $n=0,\ldots,L-1$, where
$\xi_n\stackrel{\mathrm{i.i.d.}}{\sim}\mathcal N(0,1)$ and
$\Delta L_n^{(\alpha)}=L_{t_{n+1}}^{(\alpha)}-L_{t_n}^{(\alpha)}$.
The stable increments are sampled by the Chambers--Mallows--Stuck method~\cite{chambers1976method}. We consider a Brownian model with $\sigma(v)=0.5$ and $\kappa=0$, and a Brownian--L\'evy model with $\sigma(v)=0.5(1+\cos v)$, $\alpha=1.6$, and $\kappa=0.12$.
At each reference time $t_l$, the stored history is held fixed and one step of Eq.~\eqref{eq:numerical_em_scheme} is repeated with independent noise to form the increment ensemble in Subsection~\ref{subsec:history_matched_observations}.

\begin{figure}[tb]
\centering
\includegraphics[width=\textwidth]{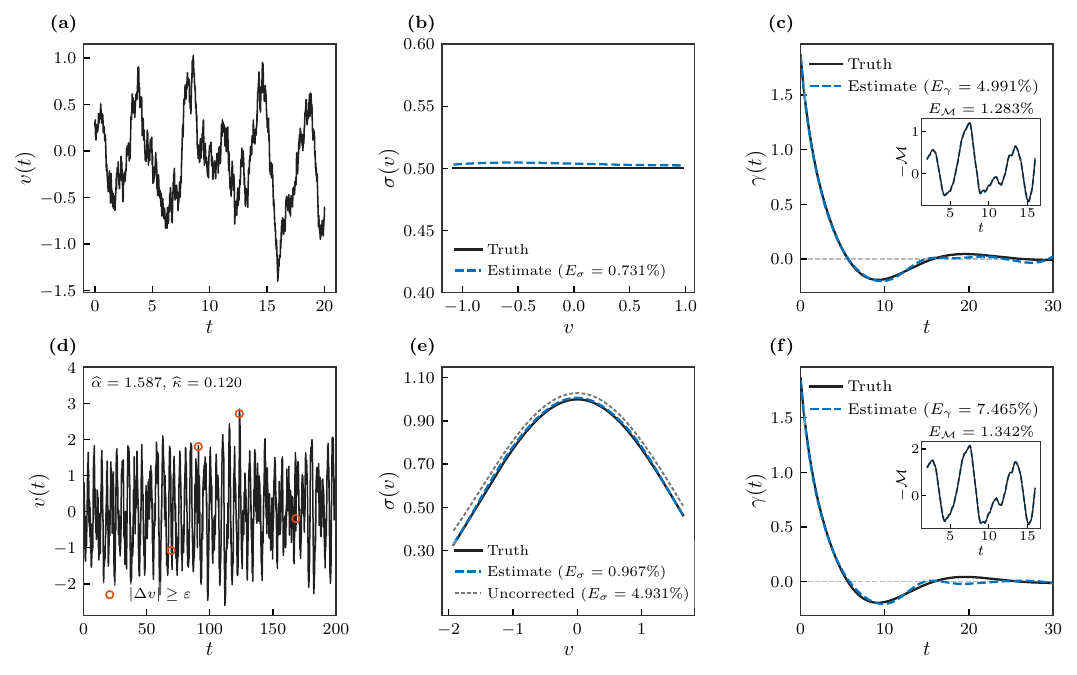}
\caption{Representative joint reconstructions from increment ensembles with matched histories.
The upper row corresponds to the Brownian case, and the lower row corresponds to the Brownian--L\'evy case.
Panels (a,d) show the reference velocities, with circles in (d) marking increments with magnitude at least $\varepsilon$. Panels (b) and (e) compare the true and estimated Brownian amplitudes; the gray curve in (e) omits the correction for small jumps. Panels (c) and (f) compare the true and estimated memory kernels, with the corresponding memory forces shown in the insets.}
\label{fig:representative_recovery}
\end{figure}

The Brownian--L\'evy experiments use the prescribed threshold
$\varepsilon=1$. The regression for the Brownian amplitude in
\eqref{eq:diffusion_loss} uses three hidden layers of width $64$, GELU activations, a softplus output, AdamW, and an L-BFGS refinement. Let $\mathcal M(t)=\int_0^t\gamma(t-s)v_s\,\mathrm{d}s$ denote the memory convolution.  The relative errors are
\begin{equation}
{
E_f
=
\frac{\|\widehat f-f\|_{L^2}}{\|f\|_{L^2}},
\qquad
f\in\{\sigma,\gamma,\mathcal M\},}
\qquad
E_{\alpha,\kappa}
=
\left(
|\widehat\alpha-\alpha|^2+
|\log(\widehat\kappa/\kappa)|^2
\right)^{1/2}.
\label{eq:numerical_error_metrics}
\end{equation}

The reconstruction uses $N=12$ cubic B-splines on $[0,30]$. Figure~\ref{fig:representative_recovery} summarizes the reconstruction results for $M=2000$. In the Brownian case, the selected regularization parameter $\lambda=0.6953$ gives $E_\sigma=0.0073$, $E_\gamma=0.0499$, and $E_{\mathcal M}=0.0128$. 
In the Brownian--L\'evy case, $\widehat\alpha=1.5867$ and $\widehat\kappa=0.1200$. The regularization parameter is $\lambda=6.3427$, and the corresponding errors are $E_\sigma=0.0097$, $E_\gamma=0.0746$, and $E_{\mathcal M}=0.0134$.
Removing the correction for small jumps in Eq.~\eqref{eq:tail_corrected_diffusion_target} increases the Brownian amplitude error from $0.0097$ to $0.0493$. The retained jumps therefore contribute non-negligibly to the truncated quadratic variation and, without the correction, bias the Brownian amplitude estimate.
In both representative cases, the discrete Volterra matrix has full column rank. The RKHS penalty adapted to the data in Eq.~\eqref{eq:normal_equation} regularizes coefficient directions that are weakly resolved by the observed history.
The memory force error is also smaller than the kernel error in both cases. This is consistent with the smoothing action of the Volterra convolution, under which kernel errors can have a weaker effect on the resulting memory force. Accordingly, $E_{\mathcal M}$ serves as a complementary measure of the dynamical accuracy of the reconstructed kernel.

\subsection{Comparison with typical kernel estimators}
\label{subsec:benchmark_comparison}

We compare the proposed estimator with the Prony--Sobolev method of Lang and Lu~\cite{lang2026memory} and the likelihood--EM method for GLEs in~\cite{vroylandt2022likelihood}.
The benchmark uses additive Gaussian forcing, $\Delta t=10^{-2}$, $M=1000$, $L=3000$, and $B=ML=3\times10^6$. The proposed estimator uses increment ensembles conditioned on history, Prony--Sobolev uses ensemble correlations from a common initial condition, and likelihood--EM uses independent trajectories.
Figure~\ref{fig:benchmark_comparison}(a) gives a kernel error of $0.0602$ for the proposed estimator, $0.0914$ for Prony--Sobolev, and $0.1628$ for likelihood--EM.  The EM run reaches the prescribed limit of $2000$ iterations.
\begin{figure}[tb]
\centering
\includegraphics[width=\textwidth]{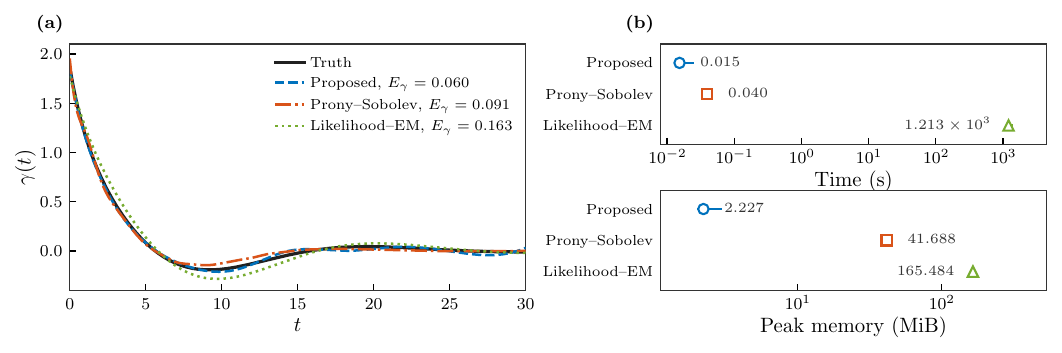}
\caption{Comparison of kernel estimators under additive Gaussian forcing with the common nominal data volume $B=3\times10^6$.
Panel (a) shows the reference kernel and the proposed, Prony--Sobolev, and likelihood--EM estimates, with their relative $L^2$ errors reported in the legend. Panel (b) reports kernel fitting time and incremental peak memory after input preparation; markers and horizontal bars denote medians and IQRs over six repetitions.}
\label{fig:benchmark_comparison}
\end{figure}

Panel (b) measures kernel fitting after the inputs have been prepared. Over six repetitions, the median time and incremental peak memory are $0.0153$~s and $2.23$~MiB for the proposed estimator, $0.0396$~s and $41.7$~MiB for Prony--Sobolev, and $1.21\times10^3$~s and $165.5$~MiB for likelihood--EM.
The differences in fitting cost reflect the form of the three estimators. The proposed method estimates the conditional moments and solves one regularized linear system, Prony--Sobolev reconstructs the kernel from ensemble correlations, and likelihood--EM repeatedly updates the latent variables and model parameters. In this experiment, the direct conditional-moment formulation gives the smallest kernel error together with the lowest fitting time and memory use.

\subsection{Sampling and observation sensitivity}
\label{subsec:parameter_studies}

We next examine how the amount and quality of the observed data affect the reconstruction. The increment ensemble size controls the accuracy of the empirical conditional moments, while the observation length determines the number of equations available for the Volterra inversion.  We also add measurement noise under several observation settings to assess its effect on the stochastic parameters and the memory kernel.

\paragraph{Increment ensemble size and observation length.}

In Figure~\ref{fig:sampling_budget}(a), the normalized RMSEs of both empirical Kramers--Moyal coefficients decrease from approximately $0.0317$ at $M=1000$ to $0.0119$ at $M=7000$, following the $M^{-1/2}$ reference line. Each RMSE is divided by the RMS, across reference states, of the standard deviations in Eqs.~\eqref{eq:d1_error_exact} and~\eqref{eq:d2_error_exact} evaluated at $M=1$. Errors are centered at the frozen-step expectations, including the $b_l^2h/2$ contribution for the second coefficient; here $b_l$ is the numerical drift used to generate the increments. The normalization is therefore independent of $M$.
At fixed $L=3000$, panel (b) shows a slower decrease in the median kernel error, from $0.0711$ to $0.0214$.
Panel (c) compares three observation lengths. At $M=500$, the median kernel error decreases from $0.0769$ for $L=3000$ to $0.0271$ for $L=6000$ and $0.0185$ for $L=9000$. For $L=9000$, increasing $M$ from $750$ to $1000$ changes the median only from $0.0117$ to $0.0108$. Thus increasing $M$ improves the conditional moments, while increasing $L$ adds equations to the Volterra system; the remaining kernel error is also governed by the conditioning and regularization of the inversion.
\begin{figure}[t]
\centering
\includegraphics[width=\textwidth]{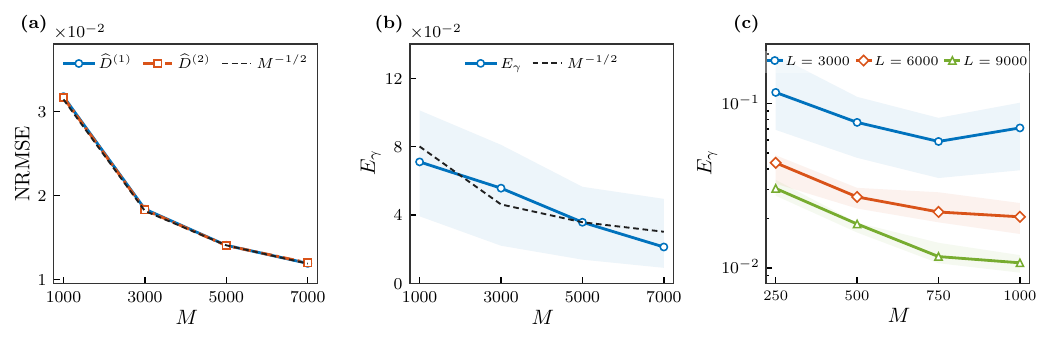}
\caption{Effects of the increment ensemble size and observation length in the Brownian case.
Panel (a) shows the normalized RMSEs of the empirical first and second Kramers--Moyal coefficients against $M$, together with the $M^{-1/2}$ reference rate. Panel (b) shows the kernel error against $M$ at $L=3000$, and panel (c) compares the kernel errors for three observation lengths. Solid curves denote medians, and the shaded bands in (b) and (c) denote IQRs over six realizations.}
\label{fig:sampling_budget}
\end{figure}

\paragraph{Observation noise.}
Figure~\ref{fig:observation_noise} examines independent centered Gaussian measurement noise with standard deviation $\sigma_{\rm obs}$ at the final point, at both endpoints, or along the full reference trajectory. In the first two settings, the observed increment is $\Delta v+\eta_+$ or $\Delta v+\eta_+-\eta_-$, while the clean history is used in $\bm\Phi$.For the untruncated Brownian estimators, these perturbations leave the first conditional moment unbiased and add $\sigma_{\rm obs}^2/(2\Delta t)$ and $\sigma_{\rm obs}^2/\Delta t$, respectively, to the second coefficient. With L\'evy truncation, the perturbation also includes changes in threshold membership.
Noise along the full trajectory also contaminates every row of $\bm\Phi$, and the Volterra inversion amplifies this errors-in-variables perturbation. In the Brownian--L\'evy case, the threshold $\varepsilon=1$ remains above the observation-noise levels, so the median tail-parameter errors change only mildly. Their wider IQRs reflect the $646$--$725$ exceedances available to the tail fit among approximately $6\times10^6$ increments.
At $\sigma_{\rm obs}=0.1$, the median Brownian amplitude errors for final-point, both-endpoint, and full-trajectory noise are $1.240$, $2.004$, and $2.009$. The corresponding kernel errors are $(0.0971,0.1554,0.7370)$ in the Brownian case and $(0.0676,0.0762,0.7474)$ in the Brownian--L\'evy case. For the latter, the median errors in $\alpha$ are $(0.067,0.064,0.069)$ and those in $\log\kappa$ are $(0.173,0.174,0.158)$.
\begin{figure}[t]
\centering
\includegraphics[width=\textwidth]{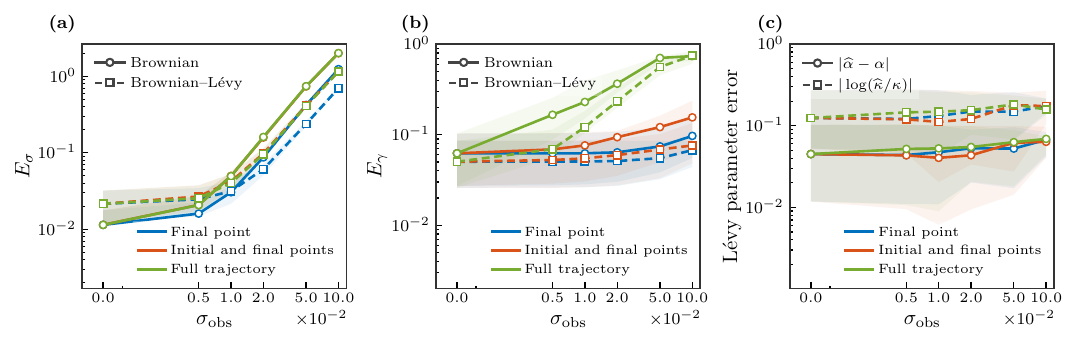}
\caption{Sensitivity to additive observation noise applied at the final point, at both interval endpoints, or along the full reference trajectory.
Panels (a) and (b) show the errors of the Brownian amplitude and memory kernel for the Brownian and Brownian--L\'evy models. Panel (c) shows $|\widehat\alpha-\alpha|$ and $|\log(\widehat\kappa/\kappa)|$ for the Brownian--L\'evy model. Curves and shaded bands denote medians and IQRs over six realizations.}
\label{fig:observation_noise}
\end{figure}

\subsection{Different dynamics and memory kernels}
\label{subsec:generality_experiments}

The representative problems above use a fixed force, a single set of Brownian--L\'evy parameters, and a rapidly decaying memory kernel. We now vary these model components to examine the reconstruction under different dynamics. The first experiment changes the known force and the stable parameters.
The second replaces the reference kernel with a sign-changing, algebraically decaying kernel.

\begin{figure}[tb]
\centering
\includegraphics[width=\textwidth]{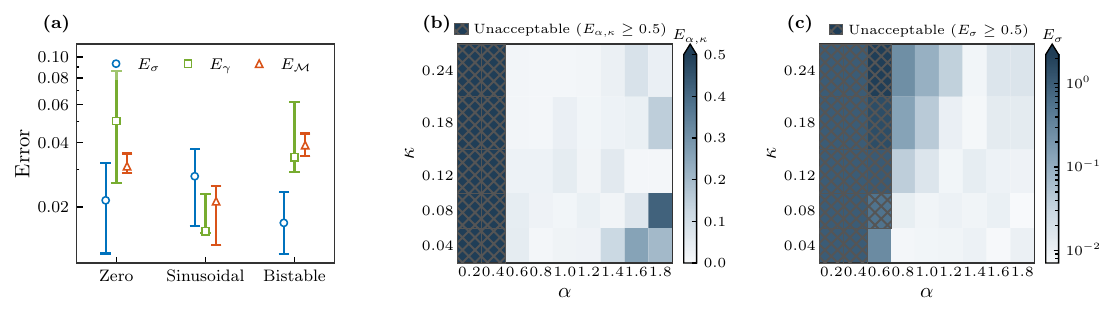}
\caption{Sensitivity to the force and L\'evy parameters. 
Panel (a) shows the medians and IQRs of $E_\sigma$, $E_\gamma$, and $E_{\mathcal M}$ over six realizations for the zero, sinusoidal, and bistable forces. Panels (b) and (c) show, for one representative realization, the joint error in the stable parameters and error in the Brownian amplitude over the $(\alpha,\kappa)$ grid. The color scale in (b) is capped at $0.5$, and crosshatching marks cells with $E_{\alpha,\kappa}\geq0.5$ in (b) or $E_\sigma\geq0.5$ in (c).}
\label{fig:force_and_levy_range}
\end{figure}

\paragraph{Forces and range of L\'evy parameters.}

Figure~\ref{fig:force_and_levy_range}(a) compares $F(v)=0$, $F(v)=\sin(2v)$, and $F(v)=v-v^3$ in the Brownian--L\'evy model.  Across the three forces, the largest median errors are $0.0279$ for $\sigma$, $0.0504$ for $\gamma$, and $0.0389$ for the memory force.
Panels (b) and (c) use $M=2000$, $L=3000$, and the grid $0.2\leq\alpha\leq1.8$, $0.04\leq\kappa\leq0.24$.  Of the $45$ parameter pairs, $35$ give $E_{\alpha,\kappa}<0.5$; the median errors over these cells are $E_{\alpha,\kappa}=0.0284$ and $E_\sigma=0.0118$.  All cells with $\alpha=0.2$ or $0.4$ exceed the cutoff for the joint error.
Their exceedance counts range from $1.28\times10^6$ to $5.65\times10^6$, while the L\'evy tail measure predicts only $1.37\times10^4$ to $4.07\times10^4$ jumps above the same threshold over all observation intervals.  The excess cannot be explained by the jump tail alone.
For these very heavy-tailed reference paths, the conditional drift location can be comparable with the threshold, so the pooled exceedances are no longer a sparse sample of the symmetric jump tail.
The Poisson--Pareto fit then produces large scale estimates and $E_\sigma\approx1$.  At $\alpha=0.6$, the {error in the tail parameters} is already between $0.0148$ and $0.0504$, while $E_\sigma$ ranges from $0.282$ to $2.24$. This intermediate row shows a different limitation.
The relative error in the estimated small-jump correction is only $0.0094$--$0.0195$ along this row, so the increase in $E_\sigma$ is not caused by inaccurate tail parameters. Instead, as $\kappa$ increases, the finite-step truncated second moment contains a larger jump contribution, and separating the Brownian component from this corrected moment becomes less accurate.  For $\alpha\geq0.8$, the median errors are $E_{\alpha,\kappa}=0.0287$ and $E_\sigma=0.0106$. The largest joint error in this region occurs at high $\alpha$ and small $\kappa$, where only a few increments exceed the threshold and tail estimation has greater sampling variation.

\begin{figure}[tb]
\centering
\includegraphics[width=\textwidth]{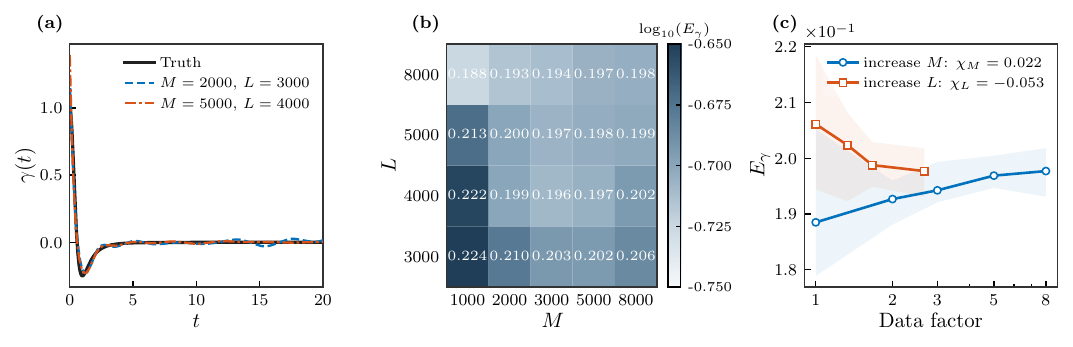}
\caption{Recovery of the algebraically decaying kernel.  
Panel (a) shows the true kernel and two representative reconstructions.  Panel (b) reports the median kernel error over six realizations for each $(M,L)$ pair, with color indicating $\log_{10}(E_\gamma)$.  Panel (c) shows the edge slices at $L=8000$ and $M=8000$; curves and bands denote medians and IQRs, while $\chi_M$ and $\chi_L$ are the corresponding median slopes on logarithmic axes.}
\label{fig:algebraic_kernel}
\end{figure}

\paragraph{Algebraically decaying memory.}

To examine memory structures beyond the rapidly decaying reference kernel in~\eqref{eq:numerical_reference_kernel}, we consider the following algebraically decaying kernel, which changes sign.
\begin{equation}
\gamma_{\mathrm{alg}}(t)
=
\frac{1-3t^2}{(1+t^2)^3},
\label{eq:algebraic_kernel}
\end{equation}
which satisfies $\gamma_{\mathrm{alg}}(t)\sim-3t^{-4}$ for large $t$ and has zero total mass on $[0,\infty)$. The Brownian model with zero force is retained.  The inverse uses $N=20$ cubic B-splines on $[0,30]$ with knots concentrated near the origin.
Both the forward convolution and $\bm\Phi$ use the left rectangle rule, and the coefficients are regularized by the RKHS term adapted to the data in Eq.~\eqref{eq:normal_equation}.
Figure~\ref{fig:algebraic_kernel}(a) shows two representative reconstructions with $E_\gamma=0.2611$ for $(M,L)=(2000,3000)$ and $E_\gamma=0.1984$ for $(M,L)=(5000,4000)$.  The reconstructions retain the initial positive lobe, the zero crossing, and the negative lobe.
Over the full grid in panel (b), the median errors range from $0.1885$ to $0.2237$. For $M\geq3000$ and $L\geq4000$, they remain between $0.1942$ and $0.2023$. For each realization, $\chi_M$ is the least-squares slope of $\log E_\gamma$ against $\log(M/M_{\min})$ along $L=8000$, and $\chi_L$ is defined analogously from $\log E_\gamma$ against $\log(L/L_{\min})$ along $M=8000$.  Panel (c) reports the medians of these slopes over the finite grid, $\chi_M=0.022$ and $\chi_L=-0.053$.  The corresponding median errors change from $0.1885$ to $0.1977$ as $M$ increases and from $0.2061$ to $0.1977$ as $L$ increases.
Thus, the reconstruction error is nearly flat over the larger
values of $M$ and $L$ in this grid.

\subsection{Approximate reference conditioning}
\label{subsec:reference_conditioning_stress_test}

The preceding experiments assume increment ensembles conditioned on a common reference history. Figure~\ref{fig:nonideal_conditioning} compares three history-preparation protocols for the Brownian and Brownian--L\'evy models to examine sensitivity to approximate history conditioning. In this experiment, both models use the state-dependent Brownian amplitude $\sigma(v)=0.5(1+\cos v)$.
Matched-history release generates increments directly from the prescribed reference history and serves as the ideal benchmark. Short-lag release permits a brief evolution between history matching and observation. Feedback-prepared release uses controlled trajectories to follow the reference history before the control is removed. The latter two protocols represent practical approximations to exact conditioning and test whether the Brownian amplitude, memory kernel, and resulting memory force are affected by imperfectly prepared histories.

\begin{figure}[t]
\centering
\includegraphics[width=\textwidth]{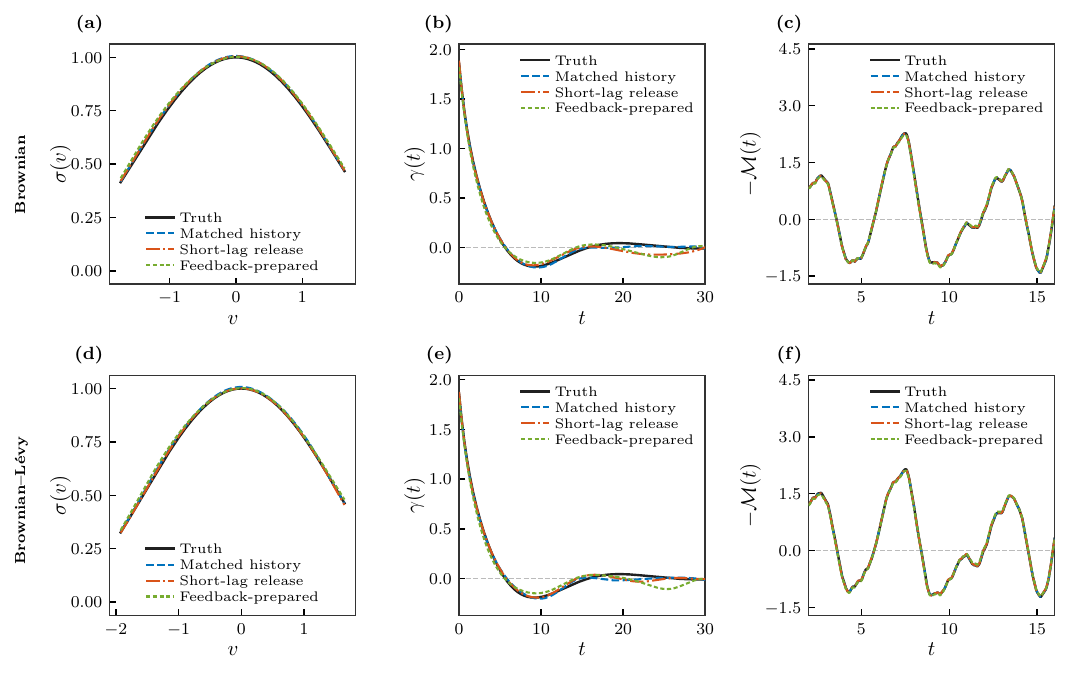}
\caption{Single-realization comparison of three history-to-release protocols. The first and second rows show the Brownian and Brownian--L\'evy models, respectively; the columns show the Brownian amplitude, memory kernel, and memory force $-\mathcal M$. Each panel compares the truth with matched-history, short-lag, and feedback-prepared releases.}
\label{fig:nonideal_conditioning}
\end{figure}

Across the three protocols, Brownian-amplitude errors remain small in both models, with $E_\sigma$ between $0.0046$ and $0.0178$, while $E_{\mathcal M}$ remains between $0.0111$ and $0.0315$. The kernel is more sensitive to history preparation. For the Brownian model, $E_\gamma$ increases from $0.0502$ for matched-history release to $0.1329$ for short-lag release and $0.1503$ for feedback-prepared release. The corresponding values for the Brownian--L\'evy model are $0.0746$, $0.0777$, and $0.1495$. Thus feedback preparation gives the largest kernel error in both cases, whereas the effect of a short lag depends more strongly on how the resulting history mismatch interacts with the realized trajectory.

This separation reflects the different roles of the two inverse stages. The Brownian amplitude is determined by the leading short-time quadratic variation, for which a small history mismatch primarily perturbs the drift and has only a higher-order influence on the local variance. Kernel recovery instead uses the conditional first moment through a first-kind Volterra equation, so bias caused by imperfect conditioning can be amplified along weakly resolved directions of the inverse problem. Nevertheless, the memory-force error remains substantially smaller than the kernel error because the convolution with the observed velocity smooths kernel discrepancies and suppresses components that are weakly excited by that trajectory. Accurate recovery of $-\mathcal M$ therefore indicates that the dynamically relevant action of the kernel is retained along the observed history, even when its full $L^2$ reconstruction is less accurate.

For the Brownian--L\'evy model, matched-history release gives $\widehat\alpha=1.5867$ and $\widehat\kappa=0.1200$; short-lag release gives $\widehat\alpha=1.6715$ and $\widehat\kappa=0.1395$; feedback-prepared release gives $\widehat\alpha=1.6069$ and $\widehat\kappa=0.1266$. The short-lag release gives the largest stability-index error in this realization.

\section{Conclusions}
\label{sec:conclusions}

We develop a data-driven framework for identifying memory and stochastic forcing in a one-dimensional generalized Langevin equation with state-dependent Brownian noise and optional additive symmetric $\alpha$-stable jumps. A finite-dimensional Markov embedding provides the conditional moment relations needed to separate memory drift, Brownian diffusion, and jump contributions.  Estimates of the stochastic parameters are then combined with an RKHS regularization of the Volterra equation for the memory kernel. The resulting procedure does not require equilibrium correlations or a fluctuation--dissipation relation. The analysis gives conditional finite-sample bounds for the local moment estimators and a conditional kernel recovery bound that accounts for sampling, finite lag, numerical integration, history preparation, and approximation in the reconstruction space.

The numerical results support the joint recovery of the stochastic parameters, memory kernel, and memory force for representative Brownian and Brownian--L\'evy models.  Increasing the increment ensemble size improves the empirical conditional moments at the expected sampling rate, but kernel accuracy also depends on the observation length and the resolution of the Volterra problem. This distinction is visible for the algebraically decaying kernel, where the error remains nearly unchanged over the larger data sizes considered.  Observation noise along the reference trajectory, inexactly prepared histories, and very heavy L\'evy forcing produce the main losses of accuracy in the experiments.

Extensions include multidimensional dynamics with matrix memory kernels, joint estimation of the deterministic force, and multiplicative L\'evy forcing. Adaptive reconstruction bases and alternative ways of obtaining history-conditioned increment ensembles are further directions.



\section*{Acknowledgements}
This work was supported by the National Natural Science Foundation of China (12401233 and 12501716), the NSFC International Creative Research Team (W2541005), the National Key R\&D Program of China (2021ZD0201300), the Guangdong Provincial Key Laboratory of Mathematical and Neural Dynamic Systems (2024B1212010004), the Guangdong Major Project of Basic Research (2025B0303000003), the Interdisciplinary Research Program of HUST (2023JCYJ012), the Hubei Provincial Natural Science Foundation of China (2026AFA034 and 2025AFB172), the Natural Science Foundation of Shenzhen (JCYJ20250604183428039), the Guangdong Basic and Applied Basic Research Foundation (2026A1515030054), and the Hubei Key Laboratory of Engineering Modeling and Scientific Computing.

\appendix
\section{Proofs}
\label{app:proofs}

\paragraph{Brownian moment identities.}
For $Y=b_lh+\sigma_l\sqrt h\,\xi$ with $\xi\sim\mathcal N(0,1)$, Gaussian moments give
\begin{align*}
\mathbb EY&=b_lh,& \operatorname{Var}(Y)&=\sigma_l^2h,\\
\mathbb EY^2&=b_l^2h^2+\sigma_l^2h,&
\operatorname{Var}(Y^2)&=4b_l^2\sigma_l^2h^3+2\sigma_l^4h^2.
\end{align*}
Independence and the normalizations $1/(Mh)$ and $1/(2Mh)$ give the stated identities for the Brownian estimators.

\begin{proof}[Proof of Proposition~\ref{prop:levy_local_moments}]
The centered variables $Y_{l,m}^j-\mathbb E[Y_{l,m}^j\mid\mathcal H]$ have absolute value at most $B_j$ and total variance $MhV_{j,l}$ over a row. The scalar Bernstein inequality gives
\begin{equation*}
\mathbb P\!\left(\left|\sum_m(Y_{l,m}^j-\mathbb E[Y_{l,m}^j\mid\mathcal H])\right|
>\sqrt{2MhV_{j,l}x}+B_jx/3\,\middle|\,\mathcal H\right)\le2e^{-x}.
\end{equation*}
A union bound over the $2\ell$ pairs $(j,l)$ gives failure probability $4\ell e^{-x_\delta}=\delta$. Division by $Mh$ proves Eq.~\eqref{eq:levy_first_bernstein}. Adding and subtracting $m_{1,l}$ and $m_{2,l}-J_\varepsilon$ proves Eq.~\eqref{eq:levy_diffusion_propagation}.
\end{proof}

\paragraph{Small-time limits and tail correction.}
Let $Y=\Delta v_h\mathbf1_{\{|\Delta v_h|<\varepsilon\}}$. The local generator yields its moment limits for every $0<\alpha<2$. At a fixed augmented state $x=(v,\bm z)$, set $b=F(v)-\bm u^\top\bm z$. For a bounded smooth function $f$ of the velocity increment, with $f(0)=0$,
\begin{equation}
\mathcal Af(0)=bf'(0)+\frac{\sigma^2(v)}2 f''(0)
+\int_{\mathbb R}\left[f(r)-r f'(0)\mathbf1_{\{|r|<1\}}\right]\nu_{\alpha,\kappa}(\mathrm{d}r).
\label{eq:local_generator_proof}
\end{equation}
To justify the smooth-test limit, stop when $|v_{t+s}-v|\ge R$. The continuous coefficients and the memory convolution are bounded before stopping on a sufficiently short interval; the latter is continuous by $L^1$ translation continuity of $\gamma$. Localized It\^o's formula and dominated convergence give the generator at the release history. A smooth exit cutoff, constant near zero and outside radius $R$, gives
\begin{equation*}
\limsup_{h\downarrow0}h^{-1}\mathbb P(\tau_R\le h\mid\mathcal H_t^v)
\le\int_{\mathbb R}\min\{1,4r^2/R^2\}\nu_{\alpha,\kappa}(\mathrm{d}r)
\longrightarrow0\quad(R\to\infty).
\end{equation*}
Its first two derivatives vanish at zero. Removing stopping therefore preserves the smooth-test limit. For $f_j(r)=r^j\mathbf1_{\{|r|<\varepsilon\}}$, $j=1,2,4$, smooth only near $\pm\varepsilon$. Smooth majorants of this shell bound the approximation error after division by $h$ by a constant times its jump measure. Taking $h\downarrow0$ and then the shell width to zero is valid because $\nu_{\alpha,\kappa}$ has no atoms. Symmetry and direct integration therefore yield
\begin{align*}
\lim_{h\downarrow0}h^{-1}\mathbb EY&=b,&
\lim_{h\downarrow0}h^{-1}\mathbb EY^2&=\sigma^2(v)+J_\varepsilon,\\
\lim_{h\downarrow0}h^{-1}\mathbb EY^4
&=\int_{|r|<\varepsilon}r^4\nu_{\alpha,\kappa}(\mathrm{d}r)
=\frac{2c_\alpha\kappa^\alpha}{4-\alpha}\varepsilon^{4-\alpha}.
\end{align*}
These are conditional limits at the fixed state, as in the nonlocal Kramers--Moyal identities~\cite{li2021levy}. For $j=1$, the compensated integrand vanishes near zero and is odd on the remaining symmetric annuli, so its integral is zero for every $0<\alpha<2$. Since $\mathbb EY=O(h)$ and $\mathbb EY^2=O(h)$, their squared expectations divided by $h$ vanish, proving Eq.~\eqref{eq:levy_variance_limits}. The localized argument also applies directly to the original convolution conditioned on its complete history, without an exact finite-dimensional embedding.

For the pooled count, suppose all replicas are conditionally independent and set
$\Lambda_{\varepsilon,h}=(\ell h)^{-1}\sum_l\mathbb P(|\Delta v_{l,1}|\ge\varepsilon\mid\mathcal H)$.
The sum of the Bernoulli variances is at most $\ell Mh\Lambda_{\varepsilon,h}$. Hence, for $x>0$, with failure probability at most $2e^{-x}$,
\begin{equation}
|\widehat\Lambda_\varepsilon-\Lambda_\varepsilon|
\le\sqrt{\frac{2\Lambda_{\varepsilon,h}x}{\ell Mh}}+\frac{x}{3\ell Mh}
+|\Lambda_{\varepsilon,h}-\Lambda_\varepsilon|.
\label{eq:tail_count_error}
\end{equation}
Eq.~\eqref{eq:stable_correction_sensitivity} follows by substituting
$\widehat J_\varepsilon=\varepsilon^2\widehat\alpha\widehat\Lambda_\varepsilon/(2-\widehat\alpha)$ and subtracting the analogous identity for $J_\varepsilon$. It separates count error, finite-lag tail bias, and the index-estimation error.

\paragraph{Amplitude and forward errors.}
For any comparator $\theta$, the optimization residual and two triangle inequalities give
\begin{align*}
\|f_{\widehat\theta}(\bm v)-f_\star(\bm v)\|_\ell
&\le\|f_{\widehat\theta}(\bm v)-\widehat{\bm D}^{(2)}\|_\ell
+\|\widehat{\bm D}^{(2)}-f_\star(\bm v)\|_\ell\\
&\le\|f_\theta(\bm v)-f_\star(\bm v)\|_\ell
+2\|\widehat{\bm D}^{(2)}-f_\star(\bm v)\|_\ell+\eta_{\rm opt}.
\end{align*}
Taking the infimum proves Eq.~\eqref{eq:diffusion_fit_decomposition}. The pointwise inequality $(a-b)^2\le|a^2-b^2|$ for $a,b\ge0$ and Cauchy--Schwarz prove the amplitude bound.

Write $I_l(\gamma)=\int_0^{t_l}\gamma(t_l-s)v_s\,\mathrm{d}s$. The exact decomposition is
\begin{equation}
\widehat{\bm y}=\bm\Phi\bm c_N+\bm d+\bm e,\qquad
e_l=m_{1,l}-\widehat D_l^{(1)},\qquad
d_l=b_l-m_{1,l}+I_l(\gamma)-(\bm\Phi\bm c_N)_l.
\label{eq:inverse_data_decomposition}
\end{equation}
Cauchy--Schwarz gives $|I_l(\gamma)-I_l(\gamma_N)|\le\|v\|_{L^2}\eta_N$, and therefore $\|\bm d\|_2\le\sqrt\ell\,r_{N,h}$. To make the quadrature term explicit, let $v_h(s)=v_j$ on $[t_j,t_{j+1})$ and let $\omega_{\gamma_N}(h)$ be the modulus of continuity of $\gamma_N$. Adding and subtracting the grid value of the kernel in each integral gives
\begin{equation}
q_{N,h}\le\omega_{\gamma_N}(h)\|v\|_{L^1}
+\|\gamma_N\|_\infty\|v-v_h\|_{L^1}\longrightarrow0
\quad(h\downarrow0,\ N\text{ fixed}).
\label{eq:cadlag_quadrature}
\end{equation}
A c\`adl\`ag path on $[0,T]$ is bounded and $v_h\to v$ at its continuity points, hence in $L^1$ by dominated convergence.  For spline kernels, $\omega_{\gamma_N}(h)\le h\|\gamma_N'\|_\infty$.

For fixed prepared histories with the same release velocity $v_l$, let $\widetilde v^{(l)}$ be the prepared history, $\widetilde b_l$ its instantaneous drift, and $\widetilde m_{1,l}$ its exact finite-lag statistic. Then
\begin{align}
|b_l-\widetilde b_l|
&\le\|\gamma\|_{L^2(0,t_l)}\|v-\widetilde v^{(l)}\|_{L^2(0,t_l)},\nonumber\\
\beta_{1,h}
&\le\max_l|b_l-\widetilde b_l|
+\max_l|\widetilde b_l-\widetilde m_{1,l}|
+\max_l|\widetilde m_{1,l}-m_{1,l}|.
\label{eq:history_bias_propagation}
\end{align}
The terms represent history mismatch, finite lag under the prepared history, and numerical or observation effects on the actual statistic. If increments are measured from their own perturbed release velocity, the drift mismatch additionally includes $|F(v_l)-F(\widetilde v_l)|$. If they are instead centered at $v_l$, the release offset must also be included in the finite-lag statistic; it contributes to the increment statistic as well as to the drift mismatch. In the jump case, numerical perturbations of the truncated statistic include threshold crossings.

\begin{proof}[Proof of Theorem~\ref{thm:resolved_recovery}]
We combine a bound for the sampling error in the observed subspace with the exact spectral solution. The independent centered row errors in Eq.~\eqref{eq:inverse_data_decomposition} satisfy
\begin{equation*}
\mathbb E[\bm e\mid\mathcal H]=0,\qquad
\operatorname{Cov}(\bm e\mid\mathcal H)=\frac{\operatorname{diag}(V_{1,l})}{Mh}.
\end{equation*}
Writing $\bm w=\bm U_r^\top\bm e$, we have
\begin{equation*}
\mathbb E[\|\bm w\|_2^2\mid\mathcal H]
=\frac{\operatorname{tr}(\bm U_r^\top\operatorname{diag}(V_{1,l})\bm U_r)}{Mh}
\le\frac{r\overline V_1}{Mh}.
\end{equation*}
Markov's inequality yields the Brownian choice in Eq.~\eqref{eq:projected_sampling_bound}. For frozen Gaussian increments, Gaussian concentration instead permits
$\mathfrak z_\delta=\overline\sigma(\sqrt r+\sqrt{2\log(1/\delta)})/\sqrt{Mh}$,
where $\overline\sigma=\max_l|\sigma_l|$.
For the L\'evy branch, fix a unit vector $\bm a\in\mathbb R^r$ and put $\bm q=\bm U_r\bm a$. Since $\|\bm q\|_2=1$, the independent summands of $\bm a^\top\bm w$ are bounded by $2\varepsilon/(Mh)$ and their total variance is at most $\overline V_1/(Mh)$. Thus
\begin{equation*}
\mathbb P\!\left(|\bm a^\top\bm w|>
\sqrt{\frac{2\overline V_1x}{Mh}}+\frac{2\varepsilon x}{3Mh}\,\middle|\,\mathcal H\right)\le2e^{-x}.
\end{equation*}
A $1/2$-net of the unit sphere in $\mathbb R^r$ has at most $5^r$ points and bounds the norm by twice the largest directional value. The union bound has failure probability $2\cdot5^re^{-x_{r,\delta}}=\delta$, giving the L\'evy choice of $\mathfrak z_\delta$. Either branch therefore provides a single event $\|\bm w\|_2\le\mathfrak z_\delta$ with conditional probability at least $1-\delta$.

The minimum-norm solution and its error are
\begin{align}
\widehat{\bm c}_\lambda
&=\bm V_r\operatorname{diag}\!\left(\frac{s_i^3}{s_i^4+\lambda}\right)\bm U_r^\top\widehat{\bm y},\nonumber\\
\widehat{\bm c}_\lambda-\bm c_N
&=\bm V_r\operatorname{diag}\!\left(\frac{s_i^3}{s_i^4+\lambda}\right)
(\bm w+\bm U_r^\top\bm d)-\bm R_\lambda\bm c_N.
\label{eq:coefficient_error_identity}
\end{align}
Indeed, the data and penalty both vanish on $\ker\bm\Phi$, and the minimum-norm choice sets that component to zero. Multiplying the error identity by $\bm G_N^{1/2}$ and adding the projection error gives, on the sampling event,
\begin{align*}
\|\widehat\gamma_\lambda-\gamma\|_{L^2}
&\le\eta_N+\sqrt{g_N^+}\rho_{N,\lambda}
(\|\bm w\|_2+\|\bm d\|_2)+\mathcal B_{N,\lambda}\\
&\le\eta_N+\sqrt{g_N^+}\rho_{N,\lambda}
(\mathfrak z_\delta+\sqrt\ell\,r_{N,h})+\mathcal B_{N,\lambda}.
\end{align*}
For full column rank, $\|\bm R_\lambda\|_2=\lambda/(\min_i s_i^4+\lambda)$ and
$\|\bm c_N\|_2\le(g_N^-)^{-1/2}\|\gamma\|_{L^2}$, proving the stated simplification of the bias. The sampling event is common to all $\lambda>0$ and hence covers a data-selected $\widehat\lambda$.
\end{proof}

\end{document}